\documentclass[12pt]{article}

\usepackage{amssymb}
\usepackage{latexsym}
\usepackage{amsmath}
\usepackage{indentfirst}
\usepackage{color}
\usepackage{thmtools}
\usepackage{thm-restate}
\usepackage{hyperref}
\usepackage{ulem}

\setlength{\topmargin}{-0.5in}
\setlength{\oddsidemargin}{.1in}
\setlength{\evensidemargin}{.1in}
\setlength{\textheight}{8.5in}
\setlength{\textwidth}{6.25in}

\title{Star-critical Ramsey numbers and regular Ramsey numbers for stars}
\author{Zhidan Luo{\thanks{School of Mathematics and Statistics, Hainan University, Haikou 570228, P. R. China. Email: luodan@hainanu.edu.cn.}}}
\date{}

\newtheorem{theo}{Theorem}[section]

\newtheorem{coro}[theo]{Corollary}
\newtheorem{conj}[theo]{Conjecture}

\newtheorem{fact}[theo]{Fact}

\def\q{\hspace*{\fill}$\Box$\medskip}

\topmargin 0pt
\headsep 0pt

\begin{document}

  \maketitle

  \begin{abstract}
    Let $G$ be a graph, $H$ be a subgraph of $G$, and let $G- H$ be the graph obtained from $G$ by removing a copy of $H$. Let $K_{1, n}$ be the star on $n+ 1$ vertices. Let $t\geq 2$ be an integer and $H_{1}, \dots, H_{t}$ and $H$ be graphs, and let $H\rightarrow (H_{1}, \dots, H_{t})$ denote that every $t$ coloring of $E(H)$ yields a monochromatic copy of $H_{i}$ in color $i$ for some $i\in [t]$. Ramsey number $r(H_{1}, \dots, H_{t})$ is the minimum integer $N$ such that $K_{N}\rightarrow (H_{1}, \dots, H_{t})$. Star-critical Ramsey number $r_{*}(H_{1}, \dots, H_{t})$ is the minimum integer $k$ such that $K_{N}- K_{1, N- 1- k}\rightarrow (H_{1}, \dots, H_{t})$ where $N= r(H_{1}, \dots, H_{t})$. Let $rr(H_{1}, \dots, H_{t})$ be the regular Ramsey number for $H_{1}, \dots, H_{t}$, which is the minimum integer $r$ such that if $G$ is an $r$-regular graph on $r(H_{1}, \dots, H_{t})$ vertices, then $G\rightarrow (H_{1}, \dots, H_{t})$. Let $m_{1}, \dots, m_{t}$ be integers larger than one, exactly $k$ of which are even. In this paper, we prove that if $k\geq 2$ is even, then $r_{*}(K_{1, m_{1}}, \dots, K_{1, m_{t}})= \sum_{i= 1}^{t} m_{i}- t+ 1- \frac{k}{2}$ which disproves a conjecture of Budden and DeJonge in 2022. Furthermore, we prove that
    $$rr(K_{1, m_{1}}, \dots, K_{1, m_{t}})= \begin{cases}
      \sum_{i= 1}^{t} m_{i}- t, & \text{$k\geq 2$ is even},\\
      \sum_{i= 1}^{t} m_{i}- t+ 1, & otherwise.
    \end{cases}$$

    \noindent Keywords: Star-critical Ramsey numbers, Regular Ramsey numbers
  \end{abstract}

\section{Introduction}

  Let $V(G)$ and $E(G)$ be the vertex set and the edge set of $G$, respectively. Let $K_{1, n}$ be the star on $n+ 1$ vertices. Let $t\geq 2$ be an integer and $H, H_{1}, \dots, H_{t}$ be graphs, and let $H\rightarrow (H_{1}, \dots, H_{t})$ denote that every $t$ coloring of $E(H)$ yields a monochromatic copy of $H_{i}$ in color $i$ for some $i\in [t]$. Ramsey number $r(H_{1}, \dots, H_{t})$ is the minimum integer $N$ such that $K_{N}\rightarrow (H_{1}, \dots, H_{t})$. In 1972, Harary \cite{H1} determined the value of $r(K_{1, n}, K_{1, m})$. And then Burr and Roberts extended it.

  \begin{theo}[Burr and Roberts \cite{BR}]\label{theo1.1}
    If $m_{1}, \dots, m_{t}$ are integers larger than one, exactly $k$ of which are even, then
    $$r(K_{1, m_{1}}, \dots, K_{1, m_{t}})= \begin{cases}
      \sum_{i= 1}^{t} m_{i}- t+ 1, & \text{$k\geq 2$ is even},\\
      \sum_{i= 1}^{t} m_{i}- t+ 2, & otherwise.
    \end{cases}$$
  \end{theo}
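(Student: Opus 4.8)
The plan is to establish matching upper and lower bounds. Throughout, write $S=\sum_{i=1}^{t}m_{i}$, so the claimed value is $S-t+1$ when $k\geq 2$ is even and $S-t+2$ otherwise; note that exactly $t-k$ of the $m_{i}$ are odd, so $S\equiv t-k\pmod 2$. For the easy direction of the upper bound I would argue by pigeonhole: if $E(K_{N})$ is $t$-colored with no monochromatic $K_{1,m_{i}}$ in color $i$, then in the color-$i$ subgraph $G_{i}$ every vertex has degree at most $m_{i}-1$, so for each vertex $v$ we get $N-1=\sum_{i}\deg_{G_{i}}(v)\leq\sum_{i}(m_{i}-1)=S-t$. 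Hence $K_{N}\rightarrow(K_{1,m_{1}},\dots,K_{1,m_{t}})$ as soon as $N\geq S-t+2$, giving $r\leq S-t+2$ in all cases.

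The improvement for even $k\geq 2$ comes from a parity refinement of this count. Suppose $N=S-t+1$ and $E(K_{N})$ is $t$-colored with no monochromatic star; then $N-1=S-t=\sum_{i}(m_{i}-1)$ forces $\deg_{G_{i}}(v)=m_{i}-1$ for every vertex $v$ and every $i$, i.e.\ each $G_{i}$ is $(m_{i}-1)$-regular. Since $k$ is even, $N=S-t+1\equiv(t-k)-t+1\equiv 1\pmod 2$, so $N$ is odd; and since $k\geq 2$ some $m_{i}$ is even, so $m_{i}-1$ is odd. Then $\sum_{v}\deg_{G_{i}}(v)=(m_{i}-1)N$ is odd, contradicting the handshake lemma. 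Hence $K_{S-t+1}\rightarrow(\dots)$ and $r\leq S-t+1$.

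For the matching lower bounds I would invoke two classical decomposition facts about complete graphs: (A) if $n$ is even, $K_{n}$ has a proper $1$-factorization, so it edge-decomposes into spanning subgraphs of any prescribed degrees $d_{1},\dots,d_{t}\geq 0$ with $\sum_{i}d_{i}=n-1$ (partition the $n-1$ perfect matchings into $t$ blocks of the right sizes); (B) if $n$ is odd, $K_{n}$ decomposes into $(n-1)/2$ Hamilton cycles (Walecki), hence into spanning subgraphs of any prescribed \emph{even} degrees $d_{1},\dots,d_{t}\geq 0$ with $\sum_{i}d_{i}=n-1$. Either decomposition is a $t$-coloring of $K_{n}$ with $G_{i}$ being $d_{i}$-regular, so it avoids a color-$i$ copy of $K_{1,m_{i}}$ whenever $d_{i}\leq m_{i}-1$. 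For even $k\geq 2$, take $n=S-t$, which is even since $n\equiv k\pmod 2$, and apply (A) with $d_{1}=m_{1}-2$ and $d_{i}=m_{i}-1$ for $i\geq 2$ (so $\sum_{i}d_{i}=n-1$ and $0\leq d_{i}\leq m_{i}-1$); this shows $K_{S-t}\not\rightarrow(\dots)$, so $r\geq S-t+1$. Otherwise ($k=0$ or $k$ odd), take $n=S-t+1$ and $d_{i}=m_{i}-1$ for all $i$, so $\sum_{i}d_{i}=n-1$: if $k$ is odd then $n\equiv k+1\equiv 0\pmod 2$ and (A) applies; if $k=0$ then all $m_{i}$, hence all $d_{i}=m_{i}-1$, are even and $n$ is odd, so (B) applies. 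Either way $K_{S-t+1}\not\rightarrow(\dots)$, giving $r\geq S-t+2$. Combining with the upper bounds settles both cases.

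There is no deep obstacle here: the substance is the parity bookkeeping (tracking $N\bmod 2$ against the parities of the $m_{i}-1$) together with the $1$-factorization of $K_{2n}$ and the Hamilton decomposition of $K_{2n+1}$, which are the only structural inputs; everything else is pigeonhole and the handshake lemma. The step that most needs care is verifying the side conditions in the constructions ($n\geq 2$, $d_{i}\geq 0$, $d_{i}\leq n-1$, and $d_{i}$ even where (B) is used), all of which follow quickly from $m_{i}\geq 2$.
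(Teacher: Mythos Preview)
The paper does not give its own proof of this theorem: it is stated as a background result due to Burr and Roberts, with citation~\cite{BR} and no argument. So there is nothing to compare your attempt against in the paper itself.

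That said, your proof is correct and self-contained. The two structural inputs you cite --- the $1$-factorization of $K_{2n}$ and the Hamilton decomposition of $K_{2n+1}$ --- are precisely Theorems~\ref{theo2.1} and~\ref{theo2.2} of the paper, so your argument meshes cleanly with the paper's toolkit. The pigeonhole upper bound and the handshake-lemma parity obstruction for even $k\geq 2$ are exactly the right ideas, and your case split for the lower bound (even $n$ via $1$-factors, odd $n$ with all $m_i$ odd via Hamilton cycles) is the standard route. Your side-condition checks ($d_i\geq 0$, $d_i\leq n-1$, parity of $n$) are all sound; in particular $d_i\leq m_i-1\leq S-t-1=n-1$ follows from $\sum_{j\neq i}m_j\geq 2(t-1)$ and $t\geq 2$.
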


  Let $G$ be a graph, $H$ be a subgraph of $G$, and $G- H$ be the graph obtained from $G$ by removing a copy of $H$, i.e., $V(G- H)= V(G)$ and $E(G- H)= E(G)- E(H)$. In 2011, Hook and Isaak \cite{HI} introduced the star-critical Ramsey number $r_{*}(H_{1}, \dots, H_{t})$ which is the minimum integer $k$ such that $K_{N}- K_{1, N- 1- k}\rightarrow (H_{1}, \dots, H_{t})$ where $N= r(H_{1}, \dots, H_{t})$. In 2022, Budden and DeJonge considered the star-critical Ramsey number for stars and conjectured the following.

  \begin{conj}[Budden and DeJonge \cite{BD}]\label{conj1.2}
    If $m_{1}, \dots, m_{t}$ are integers larger than one, exactly $k$ of which are even, then
    $$r_{*}(K_{1, m_{1}}, \dots, K_{1, m_{t}})= \begin{cases}
      \sum_{i= 1}^{t} m_{i}- t, & \text{$k\geq 2$ is even},\\
      1, & otherwise.
    \end{cases}$$
  \end{conj}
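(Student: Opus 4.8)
The plan is to convert every arrowing relation into the standard degree condition for stars and then to read $r_*$ off the degree sequence of the near-complete graph $K_N-K_{1,N-1-j}$. Throughout put $s=\sum_{i=1}^t m_i-t$ and let $N=r(K_{1,m_1},\dots,K_{1,m_t})$ be the value given by Theorem~\ref{theo1.1}. The basic fact is that a $t$-edge-colouring of a graph has no monochromatic $K_{1,m_i}$ in colour $i$ for every $i$ exactly when every vertex $w$ satisfies $\deg_i(w)\le m_i-1$ for all $i$; summing over $i$, any vertex carrying such a colouring has degree at most $s$. Call a colouring \emph{admissible} when it meets all these bounds. Then $K_N-K_{1,N-1-j}\not\to(K_{1,m_1},\dots,K_{1,m_t})$ holds precisely when the graph has an admissible colouring, and $r_*$ is the least $j$ for which none exists. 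Here $K_N-K_{1,N-1-j}$ is a clique $K_{N-1}$ on a set $U$ with $|U|=N-1$ together with one further vertex $v$ joined to exactly $j$ vertices of $U$; thus $v$ has degree $j$, its $j$ neighbours in $U$ have degree $N-1$, and the remaining $N-1-j$ vertices of $U$ have degree $N-2$.

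\textbf{The otherwise case.} When it is not the case that $k\ge2$ is even, Theorem~\ref{theo1.1} gives $N=s+2$, so $N-1=s+1$. For $j=0$ the graph is $K_{N-1}\cup\{v\}$; since $N-1=r-1$ the clique $K_{N-1}$ admits an admissible colouring, hence so does the whole graph, and $K_N-K_{1,N-1}\not\to$. For $j=1$ the unique neighbour $u$ of $v$ has degree $(N-2)+1=s+1>s$, so the degree condition fails at $u$ in every colouring and a monochromatic star is forced there; hence $K_N-K_{1,N-2}\to$. These two facts give $r_*=1$, confirming Conjecture~\ref{conj1.2} in this case.

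\textbf{The even case.} When $k\ge2$ is even, Theorem~\ref{theo1.1} gives $N=s+1$, so $|U|=s$ and $N$ is odd because $s\equiv k\equiv0\pmod 2$. The upper bound $r_*\le s$ is automatic, since $j=N-1=s$ returns $K_N$ itself; the whole content is the lower bound, and the decisive test of the conjecture is whether $K_N-K_{1,N-1-j}$ has an admissible colouring at $j=s-1$, the value at which the conjecture asserts admissibility just persists. This is the step I expect to be the main obstacle. For $j=s-1$ the vertex $v$ misses exactly one vertex $u_0\in U$, so $v$ and $u_0$ have degree $s-1$ (slack $1$) while the other $s-1$ vertices are tight at degree $s$. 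Slack $1$ forces $v$ to be deficient in exactly one colour $a$ and tight elsewhere, and likewise $u_0$ deficient in exactly one colour $b$. Fix an even colour $i$, so $m_i-1$ is odd; the colour-$i$ graph has degree $m_i-1$ at each of the $s-1$ tight vertices, an odd number of odd terms, so handshaking forces $\deg_i(v)+\deg_i(u_0)$ to be odd. If $i\notin\{a,b\}$, however, this sum equals $2(m_i-1)$, which is even---a contradiction. Hence every even colour lies in $\{a,b\}$, so there are at most two even colours: for $k=2$ this is consistent and one indeed gets $r_*=s$, but for $k\ge4$ it is impossible, so $K_N-K_{1,N-2}\to$ and $r_*\le s-1<s$, disproving the conjecture.

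The same bookkeeping generalises at once. For arbitrary $j$ the vertex $v$ has slack $s-j$, with colour deficiencies $e_i\ge0$ summing to $s-j$, and the $s-j$ non-neighbours of $v$ each have slack $1$ and are deficient in a single colour; writing $d_i$ for the number of non-neighbours deficient in colour $i$, we have $\sum_i d_i=s-j$. Handshaking in each even colour (the $j$ tight neighbours contribute $j(m_i-1)$ with $m_i-1$ odd) now forces $e_i+d_i$ to be odd, so $e_i+d_i\ge1$; summing over the $k$ even colours gives $k\le\sum_i(e_i+d_i)=2(s-j)$, i.e.\ $j\le s-\tfrac{k}{2}$. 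Thus no admissible colouring exists once $j\ge s+1-\tfrac{k}{2}$, so $r_*\le s+1-\tfrac{k}{2}=\sum_{i=1}^t m_i-t+1-\tfrac{k}{2}$, which equals $s$ only when $k=2$. The remaining, more delicate half is the matching lower bound $r_*\ge s+1-\tfrac{k}{2}$: here one must actually construct admissible colourings at $j=s-\tfrac{k}{2}$, which I would do by factorising $K_{N-1}=K_s$ into $(m_i-1)$-regular pieces---available because $s$ is even---and then attaching $v$ and distributing the $\tfrac{k}{2}$ units of deficiency among its non-neighbours so that each even-colour parity demand is satisfied. Showing that such a distribution always exists, so that the parity count is the \emph{only} obstruction, is the crux of the argument.
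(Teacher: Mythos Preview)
Your analysis is correct in substance: you confirm the ``otherwise'' case, disprove the conjecture for even $k\ge4$, and arrive at the true value $r_*=\sum_{i=1}^t m_i-t+1-\frac{k}{2}$ that the paper proves as Theorem~\ref{theo1.4}. So the relevant comparison is with that proof.

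For the upper bound your route differs from the paper's. You track slack and colour-deficiency vertex by vertex and, for each even colour $i$, use handshaking on the colour-$i$ subgraph to force $e_i+d_i$ odd (hence $\ge1$), then sum to get $k\le 2(s-j)$. The paper instead counts edges globally via Corollary~\ref{coro2.3}: a $K_{1,m_i}$-free graph on $N$ vertices with $N$ odd and $m_i$ even has at most $\frac{1}{2}[(m_i-1)N-1]$ edges, so any admissible colouring of $H$ uses at most $\frac{1}{2}N(N-1)-\frac{k}{2}$ edges in total, which forces $r_*\le N-\frac{k}{2}$. Both arguments encode the same parity obstruction; yours is a local refinement that makes the role of $v$ and its non-neighbours explicit, while the paper's is shorter and does not need to analyse the degree sequence of $K_N-K_{1,N-1-j}$ at all.

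The genuine gap is the lower bound. You rightly flag the construction at $j=s-\frac{k}{2}$ as the crux, sketch an approach via factorising $K_{N-1}=K_s$, and stop. The paper supplies the missing construction, but works on $K_N$ rather than $K_{N-1}$: decompose $K_N$ into $\frac{N-1}{2}$ edge-disjoint Hamiltonian cycles (Theorem~\ref{theo2.2}); for $\frac{k}{2}$ of these cycles delete the edge through $v$ and split the resulting Hamiltonian path into two near-perfect matchings $M_i$, $M_{i+k/2}$ with $v\in V(M_i)$ and $v\notin V(M_{i+k/2})$; then give colour $i\in[k]$ the matching $M_i$ together with $\frac{m_i}{2}-1$ whole cycles, and give each odd colour $i$ exactly $\frac{m_i-1}{2}$ whole cycles. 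Every colour class has maximum degree $m_i-1$, the vertex $v$ is missing exactly one incident edge for each of the $\frac{k}{2}$ deleted cycle-edges, so $d_G(v)=N-1-\frac{k}{2}$, and the graph restricted to the other $N-1$ vertices is complete. This realises precisely the parity pattern your counting argument predicts and confirms that it is the only obstruction.
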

  They proved their conjecture for all cases except that $k> 2$ is even.

  For a positive integer $r$, call a graph $r$-regular graph if every vertex has degree $r$. Let $rr(H_{1}, \dots, H_{t})$ be the regular Ramsey number for $H_{1}, \dots, H_{t}$, which is the minimum integer $r$ such that if $G$ is an $r$-regular graph on $N= r(H_{1}, \dots, H_{t})$ vertices, then $G\rightarrow (H_{1}, \dots, H_{t})$. The following holds by the definition of Ramsey number, star-critical Ramsey number, and regular Ramsey number.

  \begin{fact}\label{fact1.3}
    $1\leq r_{*}(H_{1}, \dots, H_{t})\leq rr(H_{1}, \dots, H_{t})\leq r(H_{1}, \dots, H_{t})- 1$.
  \end{fact}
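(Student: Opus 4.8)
The plan is to verify the three inequalities separately, writing $N=r(H_1,\dots,H_t)$ throughout. The rightmost inequality $rr(H_1,\dots,H_t)\le N-1$ is immediate: the only $(N-1)$-regular graph on $N$ vertices is $K_N$, and $K_N\to(H_1,\dots,H_t)$ by the definition of $N$, so $r=N-1$ already satisfies the defining condition of $rr$. For the leftmost inequality $r_*(H_1,\dots,H_t)\ge 1$, I would check that the value $k=0$ fails. Removing a star $K_{1,N-1}$ from $K_N$ isolates that star's centre, so $K_N-K_{1,N-1}\cong K_{N-1}\cup K_1$; by the minimality of $N$ there is a $t$-colouring of $E(K_{N-1})$ with no monochromatic $H_i$ in colour $i$, and since the remaining vertex carries no edges this colouring witnesses $K_N-K_{1,N-1}\not\to(H_1,\dots,H_t)$. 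Hence $r_*(H_1,\dots,H_t)\ge 1$.

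For the middle inequality $r_*(H_1,\dots,H_t)\le rr(H_1,\dots,H_t)$, set $r=rr(H_1,\dots,H_t)$, so that $0\le r\le N-1$. The key step is the observation that $K_N-K_{1,N-1-r}$ contains a copy of every $r$-regular graph on $N$ vertices. To see this, note that $K_N-K_{1,N-1-r}$ has a vertex $v$ of degree exactly $r$, joined to a set $A$ with $|A|=r$, while the remaining $N-1$ vertices $A\cup B$ (with $|B|=N-1-r$) induce a complete graph. Given an $r$-regular graph $G$ on $N$ vertices, I would pick any $u\in V(G)$, send $u$ to $v$, send the $r$ neighbours of $u$ bijectively onto $A$, and send the other $N-1-r$ vertices onto $B$; every edge of $G$ is then mapped to an edge of $K_N-K_{1,N-1-r}$, since an edge at $u$ becomes an edge at $v$ and every other edge has both ends inside the complete graph on $A\cup B$. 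Consequently, any $t$-colouring of $E(K_N-K_{1,N-1-r})$ restricts to a $t$-colouring of such a copy of $G$, and taking $G$ to be any $r$-regular graph on $N$ vertices (one exists for the relevant $r$: automatically if $N$ is even, and for odd $N$ because the values of $rr$ occurring here turn out to be even) together with $G\to(H_1,\dots,H_t)$ from the definition of $rr$ produces a monochromatic $H_i$ in colour $i$. Thus $K_N-K_{1,N-1-r}\to(H_1,\dots,H_t)$, giving $r_*(H_1,\dots,H_t)\le r$.

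There is no serious obstacle here, since the statement is essentially definitional; the step that requires the most thought is the embedding used for the middle inequality, together with the accompanying check that an $r$-regular graph on $N$ vertices actually exists for $r=rr(H_1,\dots,H_t)$ — this is the only place where a parity condition could in principle intervene, and it does not do so in the cases treated in this paper.
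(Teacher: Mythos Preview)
The paper does not actually prove this fact; it is simply asserted to follow from the definitions. Your argument is correct and fills in the details carefully.

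The only delicate point is precisely the one you flag: the middle inequality relies on the existence of an $r$-regular graph on $N$ vertices with $r=rr(H_{1},\dots,H_{t})$. Under the paper's intended reading of the definition of $rr$ --- namely that one restricts attention to values $r$ for which an $r$-regular graph on $N$ vertices actually exists (this is clear from the way the proof of Theorem~\ref{theo2.8} treats the parity-forbidden values of $r$) --- such a graph exists automatically, and your embedding argument goes through without the extra check. Under a purely literal reading of the definition, where for odd $N$ every odd $r$ would satisfy the defining condition vacuously, the inequality $r_{*}\le rr$ could in fact fail; so your caution is well placed, but this is a wrinkle in the definition rather than a gap in your argument.
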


  In this paper, we first disprove Conjecture \ref{conj1.2} for the remaining cases by proving the following.

  \begin{restatable}{theo}{r}\label{theo1.4}
    Let $m_{1}, \dots, m_{k}$ be even integers and $m_{k+ 1}, \dots, m_{t}$ be odd integers larger than one. If $k\geq 2$ is even, then
    $$r_{*}(K_{1, m_{1}}, \dots, K_{1, m_{t}})= \sum_{i= 1}^{t} m_{i}- t+ 1- \frac{k}{2}.$$
  \end{restatable}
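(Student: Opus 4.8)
\medskip\noindent\textbf{Proof plan.} Throughout, set $N:=r(K_{1,m_1},\dots,K_{1,m_t})$. By Theorem~\ref{theo1.1}, since $k\ge 2$ is even, $N=\sum_{i=1}^t m_i-t+1$; because exactly $k$ of the $m_i$ are even and $k$ is even, $N$ is \emph{odd}, and $N-1=\sum_{i=1}^t(m_i-1)=2n$ for some integer $n\ge 1$ (note $N\ge t+1\ge 3$). Call a $t$-coloring of the edges of a graph \emph{good} if the color-$i$ subgraph $G_i$ satisfies $\Delta(G_i)\le m_i-1$ for every $i$; this is exactly a coloring with no monochromatic $K_{1,m_i}$ in color $i$ for any $i$. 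In view of Fact~\ref{fact1.3}, the plan is to prove the two statements (i) $K_N-K_{1,\,k/2-1}\to(K_{1,m_1},\dots,K_{1,m_t})$, and (ii) $K_N-K_{1,\,k/2}$ admits a good coloring; together these give $r_*(K_{1,m_1},\dots,K_{1,m_t})=N-k/2=\sum_{i=1}^t m_i-t+1-\tfrac{k}{2}$.

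For (i) I would argue by edge counting. If $K_N-K_{1,\,k/2-1}$ had a good coloring $G_1,\dots,G_t$, then each $G_i$ lives on $N$ vertices with $\Delta(G_i)\le m_i-1$, so $2|E(G_i)|\le N(m_i-1)$; when $m_i$ is even, $N(m_i-1)$ is odd (as $N$ is odd), forcing $|E(G_i)|\le\tfrac12(N(m_i-1)-1)$. Summing, and using that exactly $k$ of the $m_i$ are even,
\[
\sum_{i=1}^t|E(G_i)|\ \le\ \frac{N\sum_{i=1}^t(m_i-1)}{2}-\frac{k}{2}\ =\ \binom{N}{2}-\frac{k}{2},
\]
which contradicts $\sum_{i=1}^t|E(G_i)|=|E(K_N-K_{1,\,k/2-1})|=\binom{N}{2}-(k/2-1)$. (For $k=2$ this reduces to $K_N\to(\cdots)$, true by definition of $N$.)

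For (ii) the idea is to build a coloring of $K_N$ that is good \emph{except} for a one-unit excess at a single vertex in each of $k/2$ colors, and then delete one edge per offending color. Work with $K_N=K_{2n+1}$ on vertex set $\{z\}\cup W$, $|W|=2n$, and fix a decomposition of $K_{2n+1}$ into $n$ Hamiltonian cycles $C_1,\dots,C_n$ (classical). Writing a cycle as $z\,w_1\,w_2\,\cdots\,w_{2n}\,z$, split it into its \emph{$M$-piece} $\{w_1w_2,w_3w_4,\dots,w_{2n-1}w_{2n}\}$, a perfect matching of $W$ missing $z$, and its \emph{$E$-piece} $\{zw_1,zw_{2n}\}\cup\{w_2w_3,w_4w_5,\dots,w_{2n-2}w_{2n-1}\}$, in which $z$ has degree $2$ and every vertex of $W$ has degree $1$. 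Now assign colors: each odd color $i>k$ gets $(m_i-1)/2$ whole cycles; each even color $i\le k$ gets $(m_i-2)/2$ whole cycles, and in addition $k/2$ chosen even colors get the $E$-piece of one extra cycle while the other $k/2$ even colors get the $M$-piece of that same extra cycle. A short count shows this consumes exactly $n$ cycles, so every edge of $K_{2n+1}$ receives a color, and in the resulting coloring $\deg_{G_i}(w)=m_i-1$ for all $w\in W$ and all $i$, while $\deg_{G_i}(z)=m_i-1$ for odd $i$, $=m_i$ for the $k/2$ "$E$-colors", and $=m_i-2$ for the $k/2$ "$M$-colors". Thus the degree bound fails only at $z$, only in the $E$-colors, and only by $1$; deleting one $z$-edge of each $E$-color removes $k/2$ distinct edges incident to $z$, i.e.\ a copy of $K_{1,k/2}$, and leaves a good coloring of $K_N-K_{1,\,k/2}$.

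The main obstacle is pinning down the right target coloring in (ii): one cannot simply take a Burr--Roberts coloring of $K_{N-1}$ and attach a vertex $v$ of degree $N-1-k/2$, since in any good coloring of $K_{N-1}=K_{2n}$ every vertex meets its degree bound in all colors but one (its "deficient" color), so each edge at $v$ is forced into the deficient color of its other endpoint, and $v$ is overloaded unless the deficient colors are spread out across the vertices. A parity check shows each color class has an \emph{even} number of deficient vertices, so among the $k$ even colors each has an odd — hence nonzero — surplus or deficit relative to its capacity, and balancing these forces a total ``overflow'' of at least $k/2$; this matches the obstruction in (i), and the Hamiltonian-cycle splitting above is precisely the mechanism that realizes a coloring meeting the bound. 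Once this construction is written down, verifying the count ``exactly $n$ cycles'' and the degree bookkeeping in the penultimate paragraph is routine.
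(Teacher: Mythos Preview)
Your proof is correct and follows essentially the same approach as the paper: the upper bound via the parity-sharpened edge count (the paper's Corollary~\ref{coro2.3}) is identical, and your lower-bound construction---splitting $k/2$ Hamiltonian cycles of $K_N$ into an $M$-piece and an $E$-piece, then deleting one $z$-edge per $E$-color---is a repackaging of the paper's construction, which removes the edge $u_iv$ from $C_{i,1}$ first and then splits the resulting path into the two matchings $M_i$, $M_{i+k/2}$. The only cosmetic difference is that you color all of $K_N$ and then delete $K_{1,k/2}$, whereas the paper colors $K_N-K_{1,k/2}$ directly; the underlying colorings coincide.
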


  \noindent Then we consider the regular Ramsey number for stars.

  \begin{theo}\label{theo1.5}
    Let $m_{1}, \dots, m_{t}$ be integers larger than one, exactly $k$ of which are even. Then
    $$rr(K_{1, m_{1}}, \dots, K_{1, m_{t}})= \begin{cases}
      \sum_{i= 1}^{t} m_{i}- t, & \text{$k\geq 2$ is even},\\
      \sum_{i= 1}^{t} m_{i}- t+ 1, & otherwise.
    \end{cases}$$
  \end{theo}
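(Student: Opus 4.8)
Write $d_i=m_i-1$, $D=\sum_{i=1}^t d_i=\sum_{i=1}^t m_i-t$, and $N=r(K_{1,m_1},\dots,K_{1,m_t})$. By Theorem~\ref{theo1.1}, $N=D+1$ when $k\ge 2$ is even and $N=D+2$ otherwise, so in both cases the value claimed for $rr$ in Theorem~\ref{theo1.5} is exactly $N-1$. Hence the plan is to prove $rr(K_{1,m_1},\dots,K_{1,m_t})=N-1$. The upper bound $rr\le N-1$ is Fact~\ref{fact1.3} (witnessed by $K_N$, which is $(N-1)$-regular on $N$ vertices and arrows by definition of $N$), so the work is the lower bound $rr\ge N-1$: for every integer $r\le N-2$ we must check that the $r$-regular graphs on $N$ vertices are either nonexistent or include one that fails to arrow $(K_{1,m_1},\dots,K_{1,m_t})$.

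The engine is the observation that $\chi'(G)\le D$ implies $G\not\to(K_{1,m_1},\dots,K_{1,m_t})$: take a proper edge-colouring of $G$ with $\chi'(G)\le D$ colours, partition its colour classes greedily into groups $\mathcal C_1,\dots,\mathcal C_t$ with $|\mathcal C_i|\le d_i$ (possible since the number of classes is at most $D$), and recolour every edge lying in a class of $\mathcal C_i$ by colour $i$; as each colour class is a matching, hence incident to any vertex at most once, every vertex is then incident to at most $|\mathcal C_i|\le d_i=m_i-1$ edges of colour $i$, so no monochromatic $K_{1,m_i}$ appears in colour $i$. Together with Vizing's theorem $\chi'(G)\le\Delta(G)+1$, this yields that any graph of maximum degree at most $D-1$ fails to arrow $(K_{1,m_1},\dots,K_{1,m_t})$.

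Now fix $r\le N-2$. If $r\le D-1$, then every $r$-regular graph on $N$ vertices has maximum degree at most $D-1$ and so fails to arrow by the observation; since $N-2=D-1$ whenever $k\ge 2$ is even, this handles all $r$ in that case. The only remaining situation is that $k$ is not an even integer $\ge 2$ and $r=N-2=D$. If $N$ is odd then $N(N-2)$ is odd, so no $(N-2)$-regular graph on $N$ vertices exists and there is nothing to prove. If $N$ is even, put $G=K_N-M$ for a perfect matching $M$ of $K_N$; a $1$-factorization of $K_N$ into $N-1$ perfect matchings shows $E(G)$ is covered by the remaining $N-2=D$ perfect matchings, so $\chi'(G)\le D$ and, by the observation, this $(N-2)$-regular graph on $N$ vertices fails to arrow. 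In all cases the required alternative holds, so $rr\ge N-1$; combined with the upper bound, $rr=N-1$, and substituting $N$ from Theorem~\ref{theo1.1} gives precisely the formula in Theorem~\ref{theo1.5}.

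Beyond the routine parity bookkeeping, the one genuinely delicate point is the extremal degree $r=N-2=D$ in the second case with $N$ even: there Vizing's bound is off by one, and one must instead hand over the explicit regular graph $K_N-M$, whose chromatic index is controlled by the $1$-factorization of $K_N$, which exists precisely because $N$ is even.
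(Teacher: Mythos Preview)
Your proof is correct, and it follows a genuinely different route from the paper's. The paper does not prove Theorem~\ref{theo1.5} directly but derives it as the special case $n=N$ of the more general Theorem~\ref{theo2.8}; that proof builds the non-arrowing colourings by hand from Hamiltonian-cycle decompositions of $K_n$ (Theorem~\ref{theo2.2}), splitting selected cycles into paths and near-perfect matchings and assembling the colour classes $G_i$ explicitly in several cases. You instead notice that in every case the claimed value equals $N-1$, take the upper bound for free from Fact~\ref{fact1.3}, and prove the lower bound through the single observation that $\chi'(G)\le D$ forces $G\not\to(K_{1,m_1},\dots,K_{1,m_t})$ by regrouping the colour classes of a proper edge-colouring; then Vizing's theorem handles all $r\le D-1$, and the $1$-factorization of $K_N$ (Theorem~\ref{theo2.1}) supplies the one remaining witness $K_N-M$ when $r=D$ and $N$ is even. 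Your argument is considerably shorter and more conceptual, at the price of importing Vizing's theorem, which the paper does not use; the paper's argument is more elementary and fully constructive, and in addition yields the stronger statement for all $n\ge N$ (Theorem~\ref{theo2.8}), where the case $n$ odd with $k\ge2$ even and $r=D$ genuinely requires the paper's pigeonhole refinement of the upper bound and is not covered by your chromatic-index engine (since every $D$-regular graph on an odd number of vertices has $\chi'\ge D+1$).
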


  {\bf Notations and definitions}: Let $G\cup H$ be the union of $G$ and $H$, i.e., $V(G\cup H)= V(G)\cup V(H)$ and $E(G\cup H)= E(G)\cup E(H)$. A matching of $G$ is a $1$-regular subgraph of $G$, and a maximum matching of $G$ is a matching of $G$ with the maximum size. For a positive integer $f$, a $f$-factor of graph $G$ is a $f$-regular subgraph of $G$ on $V(G)$.

\section{Star-critical Ramsey number for stars}
  We first introduce a decomposition of a complete graph by Harary in 1969.

  \begin{theo}[Harary \cite{H}]\label{theo2.1}
    $K_{2n}$ can be decomposed into $(2n- 1)$ edge-disjoint $1$-factors, and $K_{2n+ 1}$ can be decomposed into $n$ edge-disjoint $2$-factors.
  \end{theo}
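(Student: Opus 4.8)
The plan is to prove both decompositions by the classical \emph{circle method} (equivalently, the difference method in a cyclic group), since each assertion says that the edge set of a complete graph partitions into spanning subgraphs of a fixed degree; this is the combinatorial content of round-robin tournament scheduling.

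For the even case $K_{2n}$, I would label the vertices $\infty, 0, 1, \dots, 2n-2$, treating the finite labels as elements of the cyclic group $\mathbb{Z}_{2n-1}$. For each $i \in \mathbb{Z}_{2n-1}$ I define the edge set
$$M_i = \{\{\infty, i\}\} \cup \bigl\{\{i+j,\, i-j\} : 1 \le j \le n-1\bigr\},$$
all finite indices read modulo $2n-1$. First I would check that each $M_i$ is a $1$-factor: the offsets $\pm j$ for $1 \le j \le n-1$ are $2(n-1)$ distinct nonzero elements, hence cover $\mathbb{Z}_{2n-1}\setminus\{0\}$, so the pairs $\{i+j, i-j\}$ match up every finite vertex except $i$ exactly once, and the edge $\{\infty, i\}$ absorbs $\infty$ and $i$. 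Each $M_i$ thus has $n$ edges on all $2n$ vertices.

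Then I would verify edge-disjointness and total coverage. For a finite edge $\{a,b\}$, the unique index $i$ with $\{a,b\}\subseteq M_i$ is $i = 2^{-1}(a+b)$, where $2^{-1}$ is the inverse of $2$ modulo the odd number $2n-1$; the accompanying $j$ is the representative of $\{2^{-1}(a-b), -2^{-1}(a-b)\}$ lying in $\{1,\dots,n-1\}$. An edge $\{\infty, i\}$ lies in $M_j$ iff $j=i$. So every edge belongs to exactly one factor, and the count $n(2n-1)=\binom{2n}{2}$ confirms that $M_0, \dots, M_{2n-2}$ partition $E(K_{2n})$. For the odd case $K_{2n+1}$, I would label the vertices by $\mathbb{Z}_{2n+1}$ and for each difference $d\in\{1,\dots,n\}$ take
$$F_d = \bigl\{\{i,\, i+d\} : i \in \mathbb{Z}_{2n+1}\bigr\}.$$
Each $F_d$ is $2$-regular (vertex $i$ meets $i\pm d$) with $2n+1$ distinct edges since $2d \ne 0$ modulo $2n+1$, hence a $2$-factor; and every edge $\{a,b\}$ lies in the single class $d = \min(|a-b|, 2n+1-|a-b|)$, so $F_1,\dots,F_n$ partition $E(K_{2n+1})$, as $n(2n+1)=\binom{2n+1}{2}$ confirms.

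The main obstacle is the disjointness and coverage bookkeeping in the even case: showing that the assignment of a finite edge to its ``center'' $2^{-1}(a+b)$ is well-defined and that no vertex is paired with itself. Both steps hinge on $2n-1$ being odd, so that $2$ is invertible modulo $2n-1$ and $i+j \ne i-j$ for $1 \le j \le n-1$; once this is isolated the remainder is a routine count. The odd case is comparatively transparent, since the circular-distance statistic $d$ already classifies each edge uniquely.
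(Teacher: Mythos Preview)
Your proof is correct: the circle-method constructions for both $K_{2n}$ and $K_{2n+1}$ are carried out carefully, and your bookkeeping for disjointness and coverage (in particular the use of invertibility of $2$ modulo the odd modulus $2n-1$) is sound. Note, however, that the paper does not supply its own proof of this theorem: it is quoted from Harary~\cite{H} as a known result and used as a black box, so there is no argument in the paper to compare against. Your write-up is the standard round-robin construction one finds in Harary's text, so it is entirely in keeping with the cited source.
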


  \noindent Bollab{\'a}s proved a stronger result for the complete graph on odd vertices.

  \begin{theo}[Bollab{\'a}s\cite{B}]\label{theo2.2}
    $K_{2n+ 1}$ can be decomposed into $n$ edge-disjoint Hamiltonian cycles.
  \end{theo}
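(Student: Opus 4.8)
The plan is to prove this by the classical Walecki rotational construction, exhibiting $n$ explicit Hamiltonian cycles of $K_{2n+1}$ and showing their edge sets partition $E(K_{2n+1})$. First I would record the edge count: $K_{2n+1}$ has $\binom{2n+1}{2}=n(2n+1)$ edges, while a Hamiltonian cycle on $2n+1$ vertices uses exactly $2n+1$ edges, so $n$ edge-disjoint Hamiltonian cycles would use precisely $n(2n+1)=|E(K_{2n+1})|$ edges. Hence once I produce $n$ Hamiltonian cycles, the two assertions ``they are pairwise edge-disjoint'' and ``together they cover every edge'' are equivalent by counting, and it suffices to establish either one.

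Next I would set up the construction. Label the vertices by $\mathbb{Z}_{2n}\cup\{\infty\}$, placing $0,1,\dots,2n-1$ on a circle and $\infty$ at the center. Take the base Hamiltonian cycle $H_0$ to be $\infty,0,1,2n-1,2,2n-2,\dots,n,\infty$; that is, the zig-zag path on $\mathbb{Z}_{2n}$ given by $v_0=0$, $v_{2i-1}=i$, $v_{2i}=-i$ (indices mod $2n$, for $1\le i\le n$), whose two endpoints $0$ and $n$ are joined to $\infty$. For $j=0,1,\dots,n-1$ define $H_j$ to be the image of $H_0$ under the rotation $\rho_j\colon i\mapsto i+j\pmod{2n}$ fixing $\infty$. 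Since each $\rho_j$ is an automorphism of $K_{2n+1}$, every $H_j$ is automatically a Hamiltonian cycle, so the only remaining task is the disjointness (equivalently covering) claim.

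For the verification I would split the edges into those incident to $\infty$ and the internal edges on $\mathbb{Z}_{2n}$. The neighbors of $\infty$ in $H_j$ are $j$ and $n+j$, and as $j$ runs over $0,\dots,n-1$ the pairs $\{j,\,n+j\}$ partition $\mathbb{Z}_{2n}$, so every edge at $\infty$ occurs in exactly one $H_j$. For the internal edges I would classify them by difference $d\in\{1,\dots,n\}$, where class $d$ consists of all $\{i,i+d\}$. Computing the consecutive differences $1,-2,3,-4,\dots$ along the zig-zag shows that $H_0$ contains exactly two edges of each class $d\in\{1,\dots,n-1\}$ and exactly one edge of the diameter class $n$. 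Writing each class-$d$ edge via its left endpoint $a$ (so the edge is $\{a,a+d\}$), rotation acts by $a\mapsto a+j$, and the $n$ rotations of the two base edges of class $d$ cover that class without repetition exactly when the two base left endpoints differ by $n$ modulo $2n$; the corresponding statement for class $n$ is immediate because that class is a perfect matching of size $n$, covered by the $n$ rotations of its single base edge.

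The main obstacle is precisely this bookkeeping: I must verify, for every $d\in\{1,\dots,n-1\}$, that the two occurrences of difference $d$ in $H_0$ have left endpoints that are distinct modulo $2n$ but differ by exactly $n$, so that their rotation orbits $\{a,a+1,\dots,a+n-1\}$ and $\{a+n,\dots,a+2n-1\}$ tile $\mathbb{Z}_{2n}$. This follows from locating the two edges realizing the values $+d$ and $-d$ along the zig-zag and computing their endpoints explicitly as functions of the index $i$, where the care is to keep straight which difference comes from a step $v_{2i-1}-v_{2i-2}$ and which from $v_{2i}-v_{2i-1}$ (a short parity case analysis, easily sanity-checked on small $n$). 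Once this is established, each difference class and each edge at $\infty$ is covered exactly once; combined with the global edge count this forces the $n$ cycles to be simultaneously edge-disjoint and exhaustive, completing the decomposition.
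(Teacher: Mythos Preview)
Your proof via the Walecki rotational construction is correct and is the classical argument for this result. Note, however, that the paper does not supply its own proof of this theorem: it is quoted as a known result from Bollob\'as's book and used as a black box in the subsequent constructions. So there is no ``paper's proof'' to compare against; you have simply filled in what the paper cites. The only minor comment is that in your final paragraph you describe the verification that the two base edges of each difference class $d<n$ have left endpoints differing by exactly $n$ as ``bookkeeping'' to be done rather than carrying it out explicitly; for a self-contained write-up you would want to actually display those two endpoints (e.g.\ for odd $d=2i-1$ the edges are $\{-(i-1),i\}$ and $\{n-i,-(n-i)+\,(2i-1)\}$, and similarly for even $d$) and check the claim, but the computation is routine and your outline makes clear how it goes.
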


  \begin{coro}\label{coro2.2}
    Let $n$ be a positive integer. If $n$ is even, then for all $r\in [n- 1]$, there is an $r$-regular graph on $n$ vertices. If $n$ is odd, then for all even $r\in [n- 1]$, there is an $r$-regular graph on $n$ vertices.
  \end{coro}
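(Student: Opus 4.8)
The plan is to obtain every required regular graph as a union of pairwise edge-disjoint factors coming from Harary's decomposition (Theorem~\ref{theo2.1}). The one structural fact I would isolate first is elementary: if $F_{1}, \dots, F_{\ell}$ are pairwise edge-disjoint spanning subgraphs of a graph on $n$ vertices, each $d$-regular, then $F_{1}\cup\cdots\cup F_{\ell}$ is $d\ell$-regular on those same $n$ vertices, since the degree of a vertex in the union is the sum of its degrees in the $F_{i}$ (edge-disjointness is exactly what makes these add without collision).

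For the even case, write the argument as follows. By Theorem~\ref{theo2.1}, $K_{n}$ decomposes into $n-1$ edge-disjoint $1$-factors $F_{1}, \dots, F_{n-1}$. Given any $r\in[n-1]$, the graph $F_{1}\cup\cdots\cup F_{r}$ is then an $r$-regular graph on $n$ vertices, which is what we want. For the odd case, write $n=2m+1$; by Theorem~\ref{theo2.1}, $K_{n}$ decomposes into $m=(n-1)/2$ edge-disjoint $2$-factors $G_{1}, \dots, G_{m}$. Given an even $r\in[n-1]$, set $r=2s$, so $1\le s\le m$, and take $G_{1}\cup\cdots\cup G_{s}$, which is $2s=r$-regular on $n$ vertices.

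There is essentially no obstacle here: the only thing to check is that the supply of factors is large enough to realize every prescribed $r$, and this is precisely what the case split in the statement records. The parity restriction in the odd case is not an artifact of the method but is genuinely forced, since an $r$-regular graph on an odd number of vertices must have $r$ even by the handshake lemma, so the corollary is sharp. One could substitute the Hamiltonian decomposition of Theorem~\ref{theo2.2} for the $2$-factorization in the odd case, but that extra strength is not needed for this statement.
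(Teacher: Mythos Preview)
Your proof is correct and is exactly the intended derivation: the paper states this result as an immediate corollary of Theorem~\ref{theo2.1} without giving an explicit argument, and your union-of-factors construction is precisely how one reads it off from that decomposition. Your remark that Theorem~\ref{theo2.2} could be substituted in the odd case but is unnecessary here is also accurate.
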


  \begin{coro}\label{coro2.3}
    Let $n$ be a positive integer, $s\leq n- 1$ be a positive integer, and $G$ be a graph on $n$ vertices without $K_{1, s}$. Then the following holds. If $n$ is odd and $s$ is even, then $e(G)\leq \frac{1}{2}\left[(s- 1)n- 1\right]$. Otherwise, $e(G)\leq \frac{1}{2}(s- 1)n$. Furthermore, the upper bound is the best.
  \end{coro}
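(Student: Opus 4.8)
The plan is to separate the inequality from the sharpness claim. For the inequality, the key observation is that $G$ contains no $K_{1,s}$ precisely when $\Delta(G)\le s-1$: a vertex of degree at least $s$ together with $s$ of its neighbours spans a $K_{1,s}$, and conversely if every degree is at most $s-1$ then no vertex can be the centre of a $K_{1,s}$. Hence by the handshake lemma $2e(G)=\sum_{v\in V(G)}\deg_G(v)\le (s-1)n$, giving $e(G)\le\tfrac12(s-1)n$, which already settles the ``otherwise'' case. When $n$ is odd and $s$ is even, $(s-1)n$ is odd while $2e(G)$ is even, so in fact $2e(G)\le (s-1)n-1$ and $e(G)\le\tfrac12[(s-1)n-1]$.

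For sharpness when we are \emph{not} in the case ``$n$ odd and $s$ even'', it suffices to exhibit an $(s-1)$-regular graph on $n$ vertices, since it has no $K_{1,s}$ and exactly $\tfrac12(s-1)n$ edges. If $s=1$ the empty graph works; otherwise $1\le s-1\le n-2$, and if $n$ is odd then $s$ must be odd, so $s-1$ is even. In both subcases Corollary~\ref{coro2.2} supplies the desired graph.

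The one place needing real work is sharpness when $n$ is odd and $s$ is even (so $s\ge 2$), where the target degree $s-1$ is odd and hence not realisable by a regular graph on an odd number of vertices; this is the main obstacle. Here I would first take, via Corollary~\ref{coro2.2}, an $(s-1)$-regular graph $G'$ on $n-1$ vertices (legitimate since $n-1$ is even and $1\le s-1\le n-2$). A short greedy argument shows $G'$ contains a matching $M$ of size $\tfrac{s-2}{2}$: repeatedly delete the two endpoints of an edge, noting that the minimum degree drops by at most $2$ per step and stays at least $1$ (and enough vertices remain since $n\ge s+1$) until $\tfrac{s-2}{2}$ edges have been chosen. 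Let $U$ be the set of $s-2$ vertices covered by $M$; now delete $E(M)$ from $G'$ and add one new vertex adjacent precisely to $U$. Every old vertex again has degree $s-1$ and the new vertex has degree $s-2$, so the resulting graph $G$ has maximum degree $s-1$ and therefore no $K_{1,s}$, while
$$e(G)=e(G')-|M|+|U|=\frac{(n-1)(s-1)}{2}-\frac{s-2}{2}+(s-2)=\frac{(s-1)n-1}{2},$$
matching the upper bound. Apart from this construction, the entire argument reduces to the handshake lemma plus one parity observation.
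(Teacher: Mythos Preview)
Your proof is correct. The inequality part is essentially the paper's argument: both use the handshake lemma, and your parity observation (that $(s-1)n$ is odd while $2e(G)$ is even) is just a crisper way of saying what the paper says, namely that not all $n$ vertices can have degree $s-1$.

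The sharpness construction in the hard case ($n$ odd, $s$ even) is where you genuinely diverge. The paper stays on $n$ vertices throughout: invoking Theorem~\ref{theo2.2}, it takes an $(s-2)$-regular graph $H'$ on $n$ vertices together with an edge-disjoint Hamiltonian cycle $C$, then adds a maximum matching of $C$ to $H'$; this raises the degree of $n-1$ vertices by one and leaves a single vertex at degree $s-2$. You instead build an $(s-1)$-regular graph on $n-1$ (even) vertices via Corollary~\ref{coro2.2}, carve out a small matching by a greedy argument, and attach a new vertex to the matched vertices. Your route is more elementary in that it avoids the Hamiltonian-cycle decomposition entirely, needing only the existence of regular graphs on an even number of vertices plus a four-line greedy matching. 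The paper's route, on the other hand, is the same Hamiltonian-cycle-plus-matching template that it reuses in the later proofs of Theorems~\ref{theo1.4} and~\ref{theo2.8}, so it fits more naturally into the overall narrative.
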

  \begin{proof}
     Since $G$ is $K_{1, s}$-free, every vertex has degree at most $s- 1$. And thus, $e(G)\leq \frac{1}{2}(s- 1)n$. If $n$ is odd and $s$ is even, then there is no $(s- 1)$-regular graph on $n$ vertices since the sum of the degree of each graph is even. By Corollary \ref{coro2.2}, there exists a $(s- 2)$-regular graph on $n$ vertices. Thus, at most $n- 1$ vertices have degree $s- 1$ and at least one vertex has degree $s- 2$. Consequently, $e(G)\leq \frac{1}{2}[(s- 1)(n- 1)+ s- 2]$.

     If $n$ is odd and $s$ is even, then by Theorem \ref{theo2.2}, let $C$ be a Hamiltonian cycle on $n$ vertices and $H'$ be a $(s- 2)$-regular graph on $n$ vertices such that $C$ and $H'$ are edge-disjoint. Let $C'$ be a maximum matching of $C$ and let $H= H'\cup C'$. Note that $H$ is a graph on $n$ vertices containing $n- 1$ vertices with degree $s- 1$ and one vertex with degree $s- 2$. Thus, $e(H)= \frac{1}{2}[(s- 1)(n- 1)+ s- 2]$.

     If either $n$ is even or $s$ is odd, then by Corollary \ref{coro2.2}, there exists a $(s- 1)$-regular graph $H$ on $n$ vertices. Thus, $e(H)= \frac{1}{2}(s- 1)n$.\q
  \end{proof}

  Now, we are ready to prove our first result.

  \r*

  \begin{proof}
    Let $N= \sum_{i= 1}^{t} m_{i}- t+ 1$ and $r_{*}= r_{*}(K_{1, m_{1}}, \dots, K_{1, m_{t}})$. Let $V= V(K_{N- 1})$ and $v$ be a vertex. Let $H$ be the graph obtained by joining $v$ and $r_{*}$ vertices of $V$. Color $E(H)$ with $t$ colors arbitrarily. Let $H_{i}$ be the graph induced by all edges with color $i$ in $H$ for every $i\in [t]$.

    Note that $N$ is odd. If $H_{i}$ does not contain $K_{1, m_{i}}$ for every $i\in [t]$, then by Corollary \ref{coro2.3},
    $$e(H)= \sum_{i= 1}^{k} e(G_{i})+ \sum_{i= k+ 1}^{t} e(G_{i})\leq \sum_{i= 1}^{k} \frac{1}{2}[(m_{i}- 1)N- 1]+ \sum_{i= k+ 1}^{t} \frac{1}{2}(m_{i}- 1)N= \frac{1}{2}N(N- 1)- \frac{k}{2}.$$
    Thus, if $e(H)\geq \frac{1}{2}N(N- 1)- \frac{k}{2}+ 1$, then by the pigeonhole principle, there is either $i_{0}\in [k]$ such that $e(H_{i_{0}})\geq \frac{1}{2}[(m_{i}- 1)N+ 1]$ or $i_{0}\in [t]\backslash [k]$ such that $e(H_{i_{0}})\geq \frac{1}{2}(m_{i}- 1)N+ 1$. By Corollary \ref{coro2.3} again, $H_{i_{0}}$ contains a copy of $K_{1, m_{i_{0}}}$. Consequently,
    $$r_{*}\leq \frac{1}{2}N(N- 1)- \frac{k}{2}+ 1- {N- 1\choose 2}= N- \frac{k}{2}$$

    Let $G$ be the graph obtained by joining $v$ and $N- \frac{k}{2}- 1$ vertices of $V$, which will be chosen later. Let $G_{i}$ be the graph induced by all edges with color $i$ in $G$. We will color $E(G)$ with $t$ colors such that $G_{i}$ does not contain $K_{1, m_{i}}$ for every $i\in [t]$.

    By Theorem \ref{theo2.2}, $K_{N}$ can be decomposed into $\frac{N- 1}{2}$ edge-disjoint Hamiltonian cycles and denote them by $C_{i, j}$ where $i\in [t]$ and $j$ satisfies the following: if $i\in [\frac{k}{2}]$, then $j\in [\frac{m_{i}}{2}]$; if $i\in [k]\backslash [\frac{k}{2}]$, then $j\in [\frac{m_{i}}{2}]\backslash [1]$; if $i\in [t]\backslash [k]$, then $j\in [\frac{m_{i}- 1}{2}]$. For every $i\in [\frac{k}{2}]$, let $u_{i}v\in E(C_{i, 1})$ and let $P_{i, 1}$ be the graph obtained from $C_{i, 1}$ by removing $u_{i}v$. Since $C_{i, 1}$ is a Hamiltonian cycle, $P_{i, 1}$ is a Hamiltonian path. Furthermore, since $N$ is odd, there are two edge-disjoint maximum matchings in $P_{i, 1}$ covering all but one vertex. Denote them by $M_{i}$ and $M_{i+ \frac{k}{2}}$, and we may assume that $v\in V(M_{i})$ and $u_{i}\in V(M_{i+ \frac{k}{2}})$. Finally, for every $i\in [k]$, let $G_{i}= M_{i} \cup_{j= 2}^{m_{i}/ 2} C_{i, j}$, and for every $i\in [t]\backslash [k]$, let $G_{i}= \cup_{j= 1}^{(m_{i}- 1)/ 2} C_{i, j}$.

    Note that for every $i\in \left[\frac{k}{2}\right]$, all vertices of $G_{i}$ have degree $m_{i}- 1$ except that $u_{i}$ has degree $m_{i}- 2$; for every $i\in [k]\backslash \left[\frac{k}{2}\right]$, all vertices of $G_{i}$ have degree $m_{i}- 1$ except that $v$ has degree $m_{i}- 2$; for every $i\in [t]\backslash [k]$, all vertices of $G_{i}$ have degree $m_{i}- 1$. Thus, for every $i\in [t]$, $G_{i}$ does not contain $K_{1, m_{i}}$.

    Note that $d_{G}(v)= N- 1- \frac{k}{2}$ and $G[V]= K_{N- 1}$. Thus, $r_{*}\geq N- \frac{k}{2}$ and we finish the proof.\q
  \end{proof}

\section{Regular Ramsey number for stars}

  In this section, we will prove a more general result, and Theorem \ref{theo1.5} is a direct corollary.

  \begin{theo}\label{theo2.8}
    Let $m_{1}, \dots, m_{k}$ be even integers, $m_{k+ 1}, \dots, m_{t}$ be odd integers larger than one, and $n\geq r(K_{1, m_{1}}, \dots, K_{1, m_{t}})$ be an integer. Let $g(n)$ be the minimum integer such that if $G$ is a $g(n)$-regular graph on $n$ vertices, then $G\rightarrow (K_{1, m_{1}}, \dots, K_{1, m_{t}})$. If $n$ is odd and $k\geq 2$ is even, then $g(n)= \sum_{i= 1}^{t} m_{i}- t$. Otherwise, $g(n)= \sum_{i= 1}^{t} m_{i}- t+ 1$.
  \end{theo}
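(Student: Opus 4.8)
The plan is to establish matching upper and lower bounds for $g(n)$. Throughout write $M=\sum_{i=1}^{t}m_{i}-t=\sum_{i=1}^{t}(m_{i}-1)$; by Theorem~\ref{theo1.1} the hypothesis $n\ge r(K_{1,m_{1}},\dots,K_{1,m_{t}})$ forces $M\le n-1$, and $M\equiv k\pmod 2$. The one elementary fact driving everything is that in any $t$-colouring of the edges of a graph with no monochromatic $K_{1,m_{i}}$ in colour $i$, each vertex is incident to at most $m_{i}-1$ edges of colour $i$, hence to at most $M$ edges in all.

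For the upper bound, first note that if $G$ is $(M+1)$-regular, then at every vertex $v$ we have $\sum_{i}\deg_{G_{i}}(v)=M+1>\sum_{i}(m_{i}-1)$, so by pigeonhole $\deg_{G_{i}}(v)\ge m_{i}$ for some $i$ and $G$ contains a monochromatic $K_{1,m_{i}}$; thus $g(n)\le M+1$ in all cases. When $n$ is odd and $k\ge 1$ I would strengthen this to $g(n)\le M$: if $G$ is $M$-regular and has a colouring with no monochromatic $K_{1,m_{i}}$, then $\deg_{G_{i}}(v)\le m_{i}-1$ for all $i,v$, and since $\sum_{i}\deg_{G_{i}}(v)=M=\sum_{i}(m_{i}-1)$ equality holds throughout, so each $G_{i}$ is $(m_{i}-1)$-regular; choosing an even $m_{i}$ (one exists since $k\ge 1$) gives an $(m_{i}-1)$-regular graph with $m_{i}-1$ odd on an odd number of vertices, which is impossible. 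When $k\ge 2$ is even this is not vacuous and yields $g(n)\le\sum m_{i}-t$.

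For the lower bound I would produce explicit non-arrowing regular graphs of the largest degree admissible below the claimed value. Decompose $K_{n}$ into $n-1$ perfect matchings if $n$ is even (Theorem~\ref{theo2.1}), or into $\tfrac{n-1}{2}$ Hamiltonian cycles if $n$ is odd (Theorem~\ref{theo2.2}). If $n$ is even, let colour $i$ receive exactly $m_{i}-1$ of the matchings; then every $G_{i}$ is $(m_{i}-1)$-regular, $G=\bigcup_{i}G_{i}$ is $M$-regular (this needs $M\le n-1$), and no $G_{i}$ contains $K_{1,m_{i}}$, so $g(n)\ge M+1$ and hence $g(n)=M+1$. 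If $n$ is odd, give each colour $i$ with $m_{i}$ odd enough Hamiltonian cycles to make $G_{i}$ exactly $(m_{i}-1)$-regular, and each colour $i$ with $m_{i}$ even enough Hamiltonian cycles to make $G_{i}$ initially $(m_{i}-2)$-regular; this uses degree $M-k$ at each vertex. The shortfall to the target degree is $d=k-2$ if $k\ge 2$ is even and $d=k-1$ if $k$ is odd, and in either case $d$ is an even number. Choose a $d$-regular graph $F$ on $V(K_{n})$ edge-disjoint from all cycles used (possible since the cumulative degree never exceeds $M\le n-1$), properly edge-colour $F$ with at most $\Delta(F)+1=d+1\le k$ colours, and for the colours $j$ with $m_{j}$ even add the $j$-th colour class of $F$ to $G_{j}$. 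Each such $G_{j}$ gains at most one edge per vertex, so $\Delta(G_{j})\le m_{j}-1$ still holds, while $G$ becomes regular of exactly the target degree and still has no monochromatic star. Since $K_{n}$ with $n$ odd has no regular subgraph of odd degree, exhibiting a non-arrowing graph of degree $M-2$ (when $k\ge 2$ is even) forces $g(n)\ge M$, and one of degree $M-1$ (when $k$ is odd) forces $g(n)\ge M+1$; together with the upper bounds this pins down $g(n)$.

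The crux is this parity bookkeeping in the lower bound: an even-$m_{i}$ colour class can never be exactly $(m_{i}-1)$-regular on an odd vertex set, so a precisely controlled amount of slack must be absorbed without creating a monochromatic star and without breaking regularity, and funnelling that slack through an auxiliary regular graph together with a proper edge-colouring is the device that accomplishes it; one must also check that all the Hamiltonian cycles (or matchings) and the auxiliary graph can be chosen pairwise edge-disjoint, which reduces to $M\le n-1$. Theorem~\ref{theo1.5} then follows at once by taking $n=r(K_{1,m_{1}},\dots,K_{1,m_{t}})$: by Theorem~\ref{theo1.1} this is $\sum m_{i}-t+1$, which is odd, when $k\ge 2$ is even, so $rr=g(n)=\sum m_{i}-t$; and it is $\sum m_{i}-t+2$ otherwise, giving $rr=g(n)=\sum m_{i}-t+1$.
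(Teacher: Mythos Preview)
Your argument is correct, and the upper-bound half coincides with the paper's: the pigeonhole bound $g(n)\le M+1$, and for odd $n$ with $k\ge 2$ even the observation that a good colouring of an $M$-regular graph would force every $G_i$ to be exactly $(m_i-1)$-regular, impossible when some $m_i-1$ is odd. The lower bound for even $n$ is likewise the same.

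Where you diverge is the lower-bound construction for odd $n$. The paper handles odd $k$ and even $k$ separately by an explicit device: it peels one edge off each of $\lfloor k/2\rfloor$ Hamiltonian cycles to produce pairs of near-perfect matchings, assigns these matchings to the even-$m_i$ colours, and feeds the removed edges back into one designated colour so that the single-vertex degree deficits cancel and the union is exactly regular of the target degree. Your route is more uniform: start every even-$m_i$ colour at $(m_i-2)$-regular using whole cycles, then absorb the residual deficit $d\in\{k-2,k-1\}$ through an auxiliary $d$-regular graph $F$, properly edge-coloured with at most $d+1\le k$ colours and distributed among the $k$ even colours. This avoids the case split and the bookkeeping of which vertex is missed by which matching, at the price of invoking Vizing's theorem, which the paper does not need. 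Your justification for the existence of $F$ (``cumulative degree at most $n-1$'') is most cleanly read as taking $F$ to be $d/2$ further cycles from the same Hamiltonian decomposition of $K_n$; saying this explicitly would close the only loose end. The paper's construction, by contrast, is entirely self-contained and yields a fully explicit colouring, while yours is shorter and conceptually tidier.
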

  \begin{proof}
    Note that if $\Delta(G)\geq \sum_{i= 1}^{t} m_{i}- t+ 1$, then by the pigeonhole principle, there is a monochromatic copy of $K_{1, m_{i}}$ in color $i$ for some $i\in [t]$. Thus, $g(n)\leq \sum_{i= 1}^{t} m_{i}- t+ 1$.

    If $n$ is even or $n$ is odd and $k= 0$, then by Corollary \ref{coro2.2} and Theorem \ref{theo2.1}, for every $i\in [t]$, there exists a $(m_{i}- 1)$-regular graph $H_{i}$ on $n$ vertices such that they are edge-disjoint. Note that $\cup_{i= 1}^{t} H_{i}$ is a $\left(\sum_{i= 1}^{t} m_{i}- t\right)$-regular graph such that $H_{i}$ does not contain $K_{1, m_{i}}$ for every $i\in [t]$. Thus, $g(n)\geq \sum_{i= 1}^{t} m_{i}- t+ 1$.

    In the following, assume that $n$ is odd and $k> 0$.

    Case 1: $k$ is odd.

    By Theorem \ref{theo2.2}, $K_{n}$ can be decomposed into $\frac{n- 1}{2}$ edge-disjoint Hamiltonian cycles and denote them by $C_{i, j}$ where $i\in [t]$ and $j$ satisfies the following: if $i\in [\frac{k- 1}{2}]$, then $j\in [\frac{m_{i}}{2}]$; if $i\in [k]\backslash [\frac{k- 1}{2}]$, then $j\in [\frac{m_{i}}{2}]\backslash [1]$; if $i\in [t]\backslash [k]$, then $j\in [\frac{m_{i}- 1}{2}]$. For every $i\in [\frac{k- 1}{2}]$, let $P_{i, 1}$ be the graph obtained from $C_{i, 1}$ by removing an edge $u_{i}u_{k- i}$ such that $\left|\cup_{l= 1}^{(k- 1)/2} \{u_{l}, u_{k- l}\}\right|= k- 1$. Since $C_{i, 1}$ is a Hamiltonian cycle, $P_{i, 1}$ is a Hamiltonian path. Furthermore, since $n$ is odd, there are two edge-disjoint maximum matchings in $P_{i, 1}$ covering all but one vertex. Denote them by $M_{i}$ and $M_{k- i}$, and we may assume that $u_{k- i}\in V(M_{i})$ and $u_{i}\in V(M_{k- i})$. Finally, for every $i\in [k- 1]$, let $G_{i}= M_{i} \cup_{j= 2}^{m_{i}/2} C_{i, j}$, $G_{k}= \left(\cup_{l= 1}^{\frac{k- 1}{2}} u_{l}u_{k- l}\right)\cup_{j= 2}^{m_{k}/2} C_{k, j}$, and for every $i\in [t]\backslash [k]$, let $G_{i}= \cup_{j= 1}^{(m_{i}- 1)/2} C_{i, j}$.

    Note that for every $i\in [k- 1]$, all vertices of $G_{i}$ have degree $m_{i}- 1$ except that $u_{i}$ has degree $m_{i}- 2$; all vertices of $G_{k}$ have degree $m_{k}- 2$ except that $u_{j}$ has degree $m_{k}- 1$ for every $j\in [k- 1]$; for every $i\in [t]\backslash [k]$, all vertices of $G_{i}$ have degree $m_{i}- 1$. Thus, for every $i\in [t]$, $G_{i}$ does not contain $K_{1, m_{i}}$.

    Consequently, $\cup_{i= 1}^{t} G_{i}$ is a $(\sum_{i= 1}^{t} m_{i}- t- 1)$-regular graph on $n$ vertices such that $\cup_{i= 1}^{t} G_{i}\not\rightarrow (K_{1, m_{1}}, \dots, K_{1, m_{t}})$. Note that $\sum_{i= 1}^{t} m_{i}- t$ is odd, and by Corollary \ref{coro2.2}, there does not exist a $(\sum_{i= 1}^{t} m_{i}- t)$-regular graph on $n$ vertices since $n$ is odd. Consequently, $g(n)\geq \sum_{i= 1}^{t} m_{i}- t+ 1$.

    Case 2: $k$ is even.

    Firstly, we improve the upper bound. Otherwise, note that $\sum_{i= 1}^{t} m_{i}- t$ is even, and by Corollary \ref{coro2.2}, there exists a $(\sum_{i= 1}^{t} m_{i}- t)$-regular graph $H$ on $n$ vertices such that $H\not\rightarrow (K_{1, m_{1}}, \dots, K_{1, m_{t}})$. Note that $m_{1}$ is even. By Corollary \ref{coro2.2} again and $n$ is odd, there exists a vertex $u\in V(H)$ such that there are at most $m_{1}- 2$ edges adjacent to $u$ in color $1$. Thus, at least $\sum_{i= 1}^{t- 1} m_{i}- t+ 2$ edges are adjacent to $u$ in the remaining $t- 1$ colors. By the pigeonhole principle, there exists $i_{0}\in [t]\backslash [1]$ such that at least $m_{i_{0}}$ edges are adjacent to $u$ in color $i_{0}$. A contradiction to $H\not\rightarrow (K_{1, m_{1}}, \dots, K_{1, m_{t}})$. Consequently, $g(n)\leq \sum_{i= 1}^{t} m_{i}- t$.

    In the following, we will prove that the equality holds. By Theorem \ref{theo2.2}, $K_{n}$ can be decomposed into $\frac{n- 1}{2}$ edge-disjoint Hamiltonian cycles and denote them by $C_{i, j}$ where $i\in [k]$ and $j$ satisfies the following: if $i\in [\frac{k}{2}]$, then $j\in [\frac{m_{i}}{2}]$; if $i\in [k- 1]\backslash [\frac{k}{2}]$, then $j\in [\frac{m_{i}}{2}]\backslash [1]$; If $i= k$, then $j\in [\frac{m_{k}}{2}]\backslash [2]$; if $i\in [t]\backslash [k]$, then $j\in [\frac{m_{i}- 1}{2}]$. For every $i\in [\frac{k}{2}]$, let $P_{i, 1}$ be the graph obtained from $C_{i, 1}$ by removing an edge $u_{i}u_{i+ \frac{k}{2}}$ such that $\left|\cup_{l= 1}^{k/2} \{u_{l}, u_{l+ \frac{k}{2}}\}\right|= k$. Since $C_{i, 1}$ is a Hamiltonian cycle, $P_{i, 1}$ is a Hamiltonian path. Furthermore, since $n$ is odd, there are two maximum matchings in $P_{i, 1}$ covering all but one vertex. Denote them by $M_{i}$ and $M_{i+ \frac{k}{2}}$, and we may assume that $u_{i+ \frac{k}{2}}\in V(M_{i})$ and $u_{i}\in V(M_{i+ \frac{k}{2}})$. Finally, for every $i\in [k- 1]$, let $G_{i}= M_{i} \cup_{j= 2}^{m_{i}/2} C_{i, j}$, $G_{k}= \left(\cup_{l= 1}^{k/2} u_{l}u_{l+ \frac{k}{2}}\right)\cup M_{k}\cup_{j= 3}^{m_{k}/2} C_{k, j}$, and for every $i\in [t]\backslash [k]$, let $G_{i}= \cup_{j= 1}^{(m_{i}- 1)/2} C_{i, j}$.

    Note that for every $i\in [k- 1]$, all vertices of $G_{i}$ have degree $m_{i}- 1$ except that $u_{i}$ has degree $m_{i}- 2$; all vertices of $G_{k}$ have degree $m_{k}- 3$ except that $u_{j}$ has degree $m_{k}- 2$ for every $j\in [k- 1]$; for every $i\in [t]\backslash [k]$, all vertices of $G_{i}$ have degree $m_{i}- 1$. Thus, for every $i\in [t]$, $G_{i}$ does not contain $K_{1, m_{i}}$.

    Consequently, $\cup_{i= 1}^{t} G_{i}$ is a $(\sum_{i= 1}^{t} m_{i}- t- 2)$-regular graph on $n$ vertices such that $\cup_{i= 1}^{t} G_{i}\not\rightarrow (K_{1, m_{1}}, \dots, K_{1, m_{t}})$. Note that $\sum_{i= 1}^{t} m_{i}- t- 1$ is odd, and by Corollary \ref{coro2.2}, there does not exist a $(\sum_{i= 1}^{t} m_{i}- t- 1)$-regular graph on $n$ vertices since $n$ is odd. Consequently, $g(n)\geq \sum_{i= 1}^{t} m_{i}- t$.

    All cases have been discussed and we finish the proof.\q
  \end{proof}

\section{Remark}

  In \cite{S}, Schelp asked that for graphs $H_{1}, \dots, H_{t}$, if $G$ is a graph on $n$ vertices with $\delta(G)\geq cn$ such that $G\rightarrow (H_{1}, \dots, H_{t})$, then how large should $c$ be? Let $f(n)$ be the minimum integer such that if $G$ is a graph on $n$ vertices with $\delta(G)\geq f(n)$, then $G\rightarrow (H_{1}, \dots, H_{t})$.

  \begin{theo}\label{theo2.9}
    Let $m_{1}, \dots, m_{k}$ be even integers, and $m_{k+ 1}, \dots, m_{t}$ be odd integers larger than one. If $n\geq r(K_{1, m_{1}}, K_{1, m_{2}}, \dots, K_{1, m_{t}})$, then
    $$f(n)= \begin{cases}
      \sum_{i= 1}^{t} m_{i}- t+ 1, & \text{$k= 0$ or $n$ is even},\\
      \sum_{i= 1}^{t} m_{i}- t, & otherwise.
    \end{cases}$$
  \end{theo}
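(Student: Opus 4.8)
The strategy is to mirror the proof of Theorem \ref{theo2.8}, observing that the minimum-degree condition $\delta(G) \geq f(n)$ is weaker than the regularity condition, and that the key constructions there are either already regular (hence certify the lower bound here too) or can be padded up to meet an exact minimum-degree requirement. The upper bound $f(n) \leq \sum_{i=1}^t m_i - t + 1$ is immediate from the pigeonhole argument already recorded at the start of the proof of Theorem \ref{theo2.8}: if $\delta(G) \geq \sum_{i=1}^t m_i - t + 1$ then every vertex has degree at least $\sum_{i=1}^t m_i - t + 1$, so in any $t$-coloring some color $i$ contributes at least $m_i$ edges at that vertex, giving a monochromatic $K_{1,m_i}$. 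This actually only needs one vertex of high degree, so it is robust. The case $k=0$ or $n$ even follows from Theorem \ref{theo2.8}: the $(\sum m_i - t)$-regular graph $\cup H_i$ constructed there has $\delta = \sum m_i - t$ and arrows nothing, so $f(n) \geq \sum m_i - t + 1$; combined with the upper bound this gives equality.

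For the remaining case ($n$ odd and $k \geq 1$), I would split as in Theorem \ref{theo2.8}. When $k$ is odd (or more generally whenever $\sum m_i - t + 1$ would force even-regularity issues), the construction from Case 1 of Theorem \ref{theo2.8} gives a $(\sum m_i - t - 1)$-regular graph that arrows nothing; but here, unlike in the regular setting, I can do better by adding edges. Since $n$ is odd, I want to show there is a graph $G$ on $n$ vertices with $\delta(G) = \sum m_i - t$ (when $k \geq 1$ is odd this parity is fine; when $k \geq 2$ is even the target is $\sum m_i - t - 1$... wait, re-examining the claimed formula: $f(n) = \sum m_i - t$ when $n$ odd and $k \geq 1$, regardless of parity of $k$). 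So the real content for $n$ odd, $k \geq 1$ is: (i) $f(n) \leq \sum m_i - t$, i.e., every graph with $\delta(G) \geq \sum m_i - t$ arrows; and (ii) $f(n) \geq \sum m_i - t$, i.e., some graph with $\delta(G) = \sum m_i - t - 1$ does not arrow. For (ii) when $k$ is odd, take the $(\sum m_i - t - 1)$-regular non-arrowing graph from Case 1 of Theorem \ref{theo2.8} and note $\delta = \sum m_i - t - 1$. For (ii) when $k \geq 2$ is even, take the $(\sum m_i - t - 2)$-regular graph from Case 2 of Theorem \ref{theo2.8} and add a single near-perfect matching in color $1$ (possible since $m_1$ is even, so $m_1 - 2 < m_1 - 1$ leaves room, and a near-perfect matching on $n$ odd vertices exists); this raises the minimum degree to $\sum m_i - t - 1$ while keeping every color class $K_{1,m_i}$-free, since color $1$ vertices now have degree at most $(m_1 - 3) + 1 = m_1 - 2$ or similar — the bookkeeping needs care but follows the degree tally already given in Case 2.

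The main obstacle is part (i), the upper bound $f(n) \leq \sum m_i - t$ when $n$ is odd and $k \geq 1$: I must show that \emph{every} graph $G$ on an odd number of vertices with $\delta(G) \geq \sum m_i - t$ arrows $(K_{1,m_1}, \dots, K_{1,m_t})$. This is exactly the improvement step from Case 2 of Theorem \ref{theo2.8}, and I would adapt that argument: suppose not, so each color class $G_i$ is $K_{1,m_i}$-free. Since $m_1$ is even (here I use $k \geq 1$, so some $m_i$ is even — reindex so it is $m_1$), the color-$1$ subgraph $G_1$ on $n$ odd vertices cannot be $(m_1 - 1)$-regular (odd degree sum), so by a handshake/parity argument there is a vertex $u$ with $d_{G_1}(u) \leq m_1 - 2$. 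Then $u$ has at least $\delta(G) - (m_1 - 2) \geq \sum_{i=2}^t m_i - t + 2$ incident edges spread over the other $t-1$ colors, and pigeonhole yields a color $i_0 \geq 2$ with $\geq m_{i_0}$ edges at $u$, contradicting $K_{1,m_{i_0}}$-freeness. The one subtlety is confirming $\delta(G) \geq \sum m_i - t$ genuinely forces this — but since $\delta(G) - (m_1 - 2) = \sum m_i - t - m_1 + 2 = \sum_{i \neq 1} m_i - (t-1) + 1$, and distributing a quantity that is one more than $\sum_{i \neq 1}(m_i - 1)$ among $t - 1$ colors forces some color $i_0$ to reach $m_{i_0}$, the pigeonhole goes through cleanly. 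I would then assemble all cases and note that Theorem \ref{theo1.5} and the instances at $n = r(K_{1,m_1}, \dots, K_{1,m_t})$ follow by specialization, closing the proof.
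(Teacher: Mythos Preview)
Your overall strategy matches the paper's: it explicitly says the proof of Theorem~\ref{theo2.9} is the same as that of Theorem~\ref{theo2.8} and omits details. Your upper bounds (pigeonhole, and the parity argument borrowed from Case~2 of Theorem~\ref{theo2.8} for $n$ odd with $k\geq 1$) are correct and exactly what the paper intends, as are your lower bounds in the cases $k=0$ or $n$ even, and $n$ odd with $k$ odd.

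There is, however, a genuine gap in your lower-bound construction for $n$ odd and $k\geq 2$ even. You propose to take the $(\sum m_i - t - 2)$-regular graph from Case~2 and add a near-perfect matching in color~$1$. This fails twice over. First, in that construction color~$1$ already has all vertices at degree $m_1-1$ except $u_1$ at $m_1-2$; adding a matching there creates $K_{1,m_1}$'s immediately (the degree profile ``$m_1-3$'' you cite belongs to color~$k$, not color~$1$). Second, and more structurally, a near-perfect matching on an odd vertex set misses one vertex, so even after adding it in the correct color the minimum degree stays at $\sum m_i - t - 2$, not $\sum m_i - t - 1$.

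The clean fix, in the spirit of Theorem~\ref{theo2.8}, is to abandon the regular Case~2 graph and build the color classes directly: for each $i>k$ take $(m_i-1)/2$ Hamiltonian cycles; for each $i\in[k]$ take $(m_i-2)/2$ Hamiltonian cycles together with a near-perfect matching $M_i$ missing a vertex $w_i$, arranging the $w_i$'s to be pairwise distinct (pair up the $k$ matchings from $k/2$ further Hamiltonian cycles as in the paper's constructions, simply discarding the $k/2$ leftover edges since regularity is no longer required). Then every $w_i$ has total degree $\sum m_i - t - 1$ and every other vertex has degree $\sum m_i - t$, while each color class has maximum degree $m_i-1$. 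This yields $\delta = \sum m_i - t - 1$ with no monochromatic $K_{1,m_i}$, and the cycle count $(\sum m_i - t)/2 \leq (n-1)/2$ confirms enough Hamiltonian cycles are available.
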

  The proof of Theorem \ref{theo2.9} is the same as the proof of Theorem \ref{theo2.8}, and we remove it.

  The motivation to study regular Ramsey numbers is the following. Let us think of edges in a graph as the resources we need. A complete graph (Ramsey number) is easy to construct but needs many resources. A graph with minimum degree condition (Schelp's problem) needs fewer resources but difficult to construct. And a regular graph (regular Ramsey number) is easier to construct than a graph with minimum degree condition since we can construct by Theorem \ref{theo2.1} and Theorem \ref{theo2.2}, and the resource is less than a complete graph. Thus, the regular Ramsey number has a vast potential for applicants, such as coding or more. Under the limitation of knowledge, we are not able to apply it.

\end{document}